\providecommand{\U}[1]{\protect\rule{.1in}{.1in}}
\providecommand{\U}[1]{\protect\rule{.1in}{.1in}}
\providecommand{\U}[1]{\protect\rule{.1in}{.1in}}
\theoremstyle{plain}
\newtheorem{theorem}{Theorem}
\theoremstyle{definition}
\newtheorem{remark}[theorem]{Remark}
\newtheorem{algorithm}[theorem]{Algorithm}
\numberwithin{equation}{section} \numberwithin{theorem}{section}
\newtheorem{definition}[theorem]{Definition}
\title[New projection-based scheme]{A generalized projection-based scheme for solving convex constrained optimization problems}
\author[A.~Gibali, K.-H.~K\"{u}fer, D.~Reem and P.~S\"{u}ss]{Aviv Gibali, Karl-Heinz K\"{u}fer, Daniel Reem and Philipp S\"{u}ss}
\address[A.~Gibali]{$\boxtimes$ Department of Mathematics, ORT Braude College, P.O. Box 78,
Karmiel 2161002, Israel}
\email{avivg@braude.ac.il}
\address[K.-H.~K\"{u}fer]{Optimization Department, Fraunhofer - ITWM, Kaiserslautern 67663, Germany}
\email{Karl-Heinz.Kuefer@itwm.fraunhofer.de}
\address[D.~Reem]{The Technion - Israel Institute of Technology, 3200003 Haifa, Israel}
\email{dream@technion.ac.il}
\address[P.~S\"{u}ss]{Optimization Department, Fraunhofer - ITWM, Kaiserslautern 67663, Germany}
\email{philipp.suess@itwm.fraunhofer.de}
\date{March 15, 2018. Accepted to publication in \textit{Computational Optimization and Applications}. https://link.springer.com/article/10.1007/s10589-018-9991-4}
\begin{document}

\maketitle

\begin{abstract}
In this paper we present a new algorithmic realization of a projection-based scheme for general convex
constrained optimization problem. The general idea is to transform the
original optimization problem to a sequence of feasibility problems by
iteratively constraining the objective function from above until the
feasibility problem is inconsistent. For each of the feasibility problems one
may apply any of the existing projection methods for solving it. In particular, the
scheme allows the use of subgradient projections and does not require exact
projections onto the constraints sets as in existing similar methods.

We also apply the newly introduced concept of superiorization to optimization formulation and compare its performance to our scheme. We provide some numerical results for convex quadratic test problems
as well as for real-life optimization problems coming from medical treatment
planning.\medskip

\textbf{Keywords}: Projection methods, feasibility problems, superiorization,
subgradient, iterative methods\bigskip

\textbf{$2010$ MSC}: 65K10, 65K15, 90C25\smallskip

\end{abstract}

\vspace{-2ex}

\section{Introduction}

In this paper we are concerned with a general convex optimization problem. Let
$f:%
\mathbb{R}
^{n}\rightarrow%
\mathbb{R}
$ and $\left\{  g_{i}:%
\mathbb{R}
^{n}\rightarrow%
\mathbb{R}
\right\}  _{i\in I}$, for $I=\{1,\ldots,m\}$ be convex functions. We wish to solve the following convex optimization
problem.%
\begin{gather}
\min f(x)\nonumber\\
\text{such that }g_{i}(x)\leq0\text{ for all }i\in I. \label{Problem:1}%
\end{gather}

The literature on this problem is vast and there exist many different
techniques for solving it, see e.g., \cite{BSS06,Bertsekas99,
BoydVandenberghe04} and the many references therein. As a special case when
$f\equiv0$, (\ref{Problem:1}) reduces to find a point in the convex set,%
\begin{equation}
\boldsymbol{C}:=\cap_{i\in I}C_{i}=\cap_{i\in I}\left\{  x\in%
\mathbb{R}
^{n}\mid g_{i}(x)\leq0\right\}  \neq\emptyset. \label{eq:Bold_C}%
\end{equation}
$\boldsymbol{C}$ is called the \textit{feasibility set} of (\ref{Problem:1}).
In general, the problem of finding a point in the intersection of convex sets
is known as the \textit{Convex Feasibility Problem} (CFP) or \textit{Set
Theoretic Formulation}. Many real-world problems in various areas of
mathematics and of physical sciences can be modeled in this way; see
\cite{cap88} and the references therein for an early example. More work on the
CFP can be found in \cite{Byrne, byrne04, cdh10}.

In case where all the $\left\{  g_{i}\right\}  _{i\in I}$ are linear and only
equalities are considered in $C_{i}$, meaning that all the $C_{i}\ $ are
hyper-planes, the CFP reduces to a system of linear equations; Kaczmarz
\cite{Kaczmarz37} and Cimmino \cite{Cimmino38} in the 1930's, proposed two
different projection methods, sequential and simultaneous projection methods
for solving a system of linear equalities. These methods were later extended
for solving systems of linear inequalities, see \cite{Agmon54, MS54}. Today,
projection methods are more general and are applied to solve general convex
feasibility problems.

In general, projection methods are iterative procedures that employ
projections onto convex sets in various ways. They typically follow the
principle that it is easier to project onto the individual sets (usually
closed and convex) instead of projecting onto other derived sets (e.g. their
intersection). The methods have different algorithmic structures, of which
some are particularly suitable for parallel computing, and they demonstrate
desirable convergence properties and good initial behavior patterns, for more
details see for example \cite{cccdh11}.

In last decades, due to their computational efficiency, projection methods
have been applied successfully to many real-world applications, for example in
imaging, see Bauschke and Borwein \cite{bb96} and Censor and Zenios \cite{CZ97}, in transportation problems \cite{cz91b,bk13}, sensor networks \cite{bh06}, in radiation therapy treatment planning \cite{cap88,ccmzkh10},
in resolution enhancement \cite{coo03}, in graph matching \cite{ww04}, in matrix balancing \cite{cz91, Amelunxen11}, in radiation therapy treatment planning, in resolution enhancement \cite{cccdh11, BC11, cc14}, to name but a few.
Their success is based on their ability to handle huge-size problems since they do not require storage or inversion of the full constraint matrix. Their algorithmic structure is either sequential or simultaneous, or in-between, as in the block-iterative projection methods or string-averaging projection methods which naturally support parallelization. This is one of the reasons that this class of methods was called \textquotedblleft Swiss army knife\textquotedblright,
see \cite{bk13}.

Following the above we aim to apply different projection methods for solving
(\ref{Problem:1}); in order to do that we first put (\ref{Problem:1}) into
epigraph form.%
\begin{gather}
\min t\nonumber\\
\text{such that }t\in%
\mathbb{R}%
\nonumber\\
\text{and for some }x\in\boldsymbol{C}\text{ one has }f(x)\leq t.
\label{Problem:2}%
\end{gather}
Denote by $t^{\ast}$ the optimal value of (\ref{Problem:2}) which we assumed is attained
and finite. Now, a natural idea for solving
(\ref{Problem:2}) is to construct a decreasing sequence $\left\{
t_{k}\right\}  $ such that $t_{k}\rightarrow t^{\ast}$ and at each step, for a
fixed $t_{k}$, to solve a corresponding CFP; see also \cite[Subsection
2.1.2]{Bertsekas99}. Formally this can be phrased as follows. Set
$t_{-1}=\infty$ and at the $k$-th step, with $k\geq0$, solve the following
problem:%
\begin{equation}
\text{find a point }x^{k}\text{ such that}\left\{
\begin{array}
[c]{l}%
f(x^{k})\leq t_{k-1}\\
\text{and}\\
g_{i}(x^{k})\leq0\text{ for all }i\in I.
\end{array}
\right.  \label{Problem:k_CFP}%
\end{equation}
Once a feasible point $x^{k}$ is obtained, $t_{k}$ is updated according to the
formula%
\begin{equation}
t_{k}=f(x^{k})-\varepsilon_{k}, \label{eq:epsilon_decrease}%
\end{equation}
where $\varepsilon_{k}>0$ is some user chosen constant; in the numerical
experiments in Section \ref{sec:Numerical_Experiments} we use $\epsilon
_{k}=0.1|f(x^{k})|$ whenever $|f(x^{k})|>1$ and $\epsilon_{k}=0.1$
otherwise\textbf{.} For solving these CFPs at each $k$-th step, we apply
different projection methods based on the \textit{Cyclic Subgradient
Projections} \textit{Method} (CSPM) \cite{cl81, cl82} and thus obtain several
algorithmic realizations of this general scheme.
In Subsection \ref{sec:Convergence} we discuss the issue of convergence to an approximate optimal solution of (\ref{Problem:1}) (which we call an $\varepsilon$-optimal solution.)

It might happen that the objective function decreases by $\varepsilon_{k}$ after
each step. Therefore, we not only wish to solve each of the CFPs (\ref{Problem:k_CFP}), but also end
up with a \textit{Slater point}, that is a point that solves (\ref{Problem:k_CFP}) with strict inequalities at each step
to maximize the decrease in $t_{k}$. To this end, we will make use of over
relaxation parameters in the CSPM for one realization of the scheme, but also
apply the newly introduced \textit{Superiorization} idea \cite{cdh10}, where
perturbations are used in the CSPM to steer the algorithm into the interior of
the objective level set after the $k$-th CSP. Our major contribution in this work, Section \ref{sec:Numerical_Experiments}, is the applicability of the scheme and its comparability with some existing results as demonstrated and compared intensively on benchmark quadratic programming problems and medical therapy.

The paper is organized as follows. In Section \ref{sec:Prelim} we
present several projection methods and definitions which will be useful for
our analysis. Next in Section \ref{sec:GeneralOpt} our general scheme for
solving convex optimization problems are presented and analyzed. Later in
Section \ref{sec:Numerical_Experiments} numerical experiments illustrating the
different realizations of our scheme are presented and tested for convex
quadratic programming problems and for Intensity-Modulated Radiation Therapy (IMRT). Finally in Section \ref{sec:conclusion}
conclusion and further research directions are presented.

\section{Preliminaries\label{sec:Prelim}}

In this section we provide several projection methods which are relevant to
our results, mainly orthogonal and subgradient projections. We start by presenting several definitions which will be useful for our analysis.

\begin{definition}
A sequence $\left\{  x^{k}\right\}  _{k=0}^{\infty}$ is said to be
\texttt{finite convergent} if $\lim_{k\rightarrow\infty}x^{k}=x^{\ast}$ and
there exists $N\in%
\mathbb{N}
$ such that for all $k\geq N$, $x^{k}=x^{\ast}$.
\end{definition}

Let $C$ be non-empty, closed and convex set in the Euclidean space $%
\mathbb{R}
^{n}$. Assume that the set $C$ can be represented as%
\begin{equation}
C=\left\{  x\in%
\mathbb{R}
^{n}\mid c(x)\leq0\right\}  , \label{eq:C}%
\end{equation}
where $c:%
\mathbb{R}
^{n}\rightarrow%
\mathbb{R}
$ is an appropriate continuous and convex function. Take, for example,
$c(x)=\operatorname*{dist}(x,C),$ where $\operatorname*{dist}$ is the distance
function; see, e.g., \cite[Chapter B, Subsection 1.3(c)]{hul01}.\medskip

\begin{definition}
For any point $x\in%
\mathbb{R}
^{n}$,\ the \texttt{orthogonal projection} of $x$ onto $C$, denoted by
$P_{C}(x)$ is the closest point to $x$ in $C$, that is,%
\begin{equation}
\left\Vert x-P_{C}\left(  x\right)  \right\Vert \leq\left\Vert x-y\right\Vert
\text{ for all }y\in C.
\end{equation}

\end{definition}

\begin{definition}
Let $c$ be as in the representation of $C$ in (\ref{eq:C}). The set%
\begin{equation}
\partial c(z):=\{\xi\in%
\mathbb{R}
^{n}\mid c(y)\geq c(z)+\langle\xi,y-z\rangle\text{ for all }y\in%
\mathbb{R}
^{n}\}
\end{equation}\label{eq:2.3}
is called the \texttt{subdifferential} of $c$ at $z$ and any element of
$\partial c(z)$ is called a \texttt{subgradient}.
\end{definition}

It is well-known that if $C$ is non-empty, closed and convex, then $P_C(x)$ exists and is unique. Moreover, if $c$ is differentiable at $z,$ then
$\partial c(z)=\{\nabla c(z)\}$, see for example \cite[Theorem 5.37 (p. 77)]{Tiel84}.

Now for any $x\in\mathbb{R}^{n}$ the function $\xi$ assigns some subgradient, that is $\xi(x)\in\partial
c(x)$.
\begin{definition}
For any point $x\in%
\mathbb{R}
^{n}$,\ the \texttt{subgradient projection}\textit{ }of $x$ is defined as%
\begin{equation}
\Pi_{_{C}}(x):=\left\{
\begin{array}
[c]{ll}%
x-\frac{\displaystyle c(x)}{\displaystyle\left\Vert \xi\right\Vert ^{2}}\xi &
\text{if\ }c(x)>0\text{,}\\
x & \text{if\ }c(x)\leq0\text{,}%
\end{array}
\right.
\end{equation}
where $\xi\in\partial c(x)$. It must be that $\xi \neq 0$ when $c(x)>0$, because if $\xi=0$, then from (\ref{eq:2.3}) one has $c(x)\leq c(y)$ for every $y\in \mathbb{R}^{n}$; in particular, for $y\in C$ we have $c(x)\leq c(y)=0$, a contradiction to the assumption  that $c(x)>0$.
\end{definition}

\begin{remark}
It is well-known and can be verified easily that if the set $C$ is a half-space which is presented using its canonical way (using a normal vector) then the subgradient projection is the orthogonal projection onto $C$.
\end{remark}

\begin{definition}
\label{Def:Slater}Consider the CFP%
\begin{equation}
\boldsymbol{C}:=\cap_{i\in I}C_{i}=\cap_{i\in I}\left\{  x\in%
\mathbb{R}
^{n}\mid g_{i}(x)\leq0\right\}  .
\end{equation}
We say that $\boldsymbol{C}$ satisfies the \texttt{Slater Condition} if there exists a point $x\in C$ having the property that $g_i(x)<0$ for all $i\in I$.
\end{definition}

\subsection{Projection methods and Superiorization}\label{subsec:prog}

Now we present two relevant classes of projection methods, the Orthogonal and
the Subgradient projections methods. We only introduce the
sequential versions which is relevant to our result, but there exists also simultaneous version; see e.g., \cite[Chapter 5]{CZ97}. Later
we also present the Superiorization methodology.\bigskip

\textbf{1. }\underline{\textbf{Sequential methods}}

Sequential projection methods are also refereed to as \textquotedblleft
row-action\textquotedblright\ methods. The main idea is that at each iteration
one constraint set $C_{i}$ is chosen with respect to some control sequence and
either an orthogonal or a subgradient projection is calculated.

\textbf{1.1. }\textit{Projection Onto Convex Sets} (POCS). The general
iterative step can fit into the following%
\begin{equation}
x^{k+1}=x^{k}-\lambda_{k}\left(x^{k}-P_{C_{i(k)}}(x^{k})\right)
\label{eq:POCS}%
\end{equation}
where $\lambda_{k}\in\lbrack\epsilon_{1},2-\epsilon_{2}]$ are called
\textit{relaxation parameters} for arbitrary $\epsilon_{1},\epsilon_{2}>0$ such that $\epsilon_{1}+\epsilon_{2}<2$,
$P_{C_{i(k)}}$ is the orthogonal projection of $x^{k}$ onto $C_{i(k)}$,
$\left\{  i(k)\right\}  $ is a sequence of indices according to which
individual sets $C_{i}$ are chosen, for example \textit{cyclic}
$i(k)=k\operatorname*{mod}m+1$. For the linear case with equalities and
$\lambda_{k}=1$ for all $k$, this is known as Kaczmarz's algorithm
\cite{Kaczmarz37} or \textit{Algebraic Reconstruction Technique} (ART) in the
field of image reconstruction from projection, see \cite{bgh70, hm93}. For
solving a system of interval linear inequalities which appears for example in
the field of \textit{Intensity-Modulated Radiation Therapy} (IMRT), ART3 and
especially its faster version ART3+ (see \cite{hc08}) are known to find a
solution in a finite number of steps, provided that the feasible region is
full dimensional. The successful idea of ART3+ was extended for solving
optimization problems with linear objective and interval linear inequalities
constraints, this is known as ATR3+O \cite{ccmzkh10}.\medskip

\textbf{1.2.} The \textit{Cyclic Subgradient Projections} (CSP) introduced by
Censor and Lent \cite{cl81, cl82} for solving the CFP. The iterative step of
the method is formulated as follows.%
\begin{equation}
x^{k+1}=\left\{
\begin{tabular}
[c]{ll}%
$x^{k}-\lambda_{k}\frac{g_{i(k)}(x^{k})}{\left\Vert \xi^{k}\right\Vert ^{2}%
}\xi^{k}$ & $g_{i(k)}(x^{k})>0$\\
$x^{k}$ & $g_{i(k)}(x^{k})\leq0$%
\end{tabular}
\ \ \ \ \ \ \ \right.\label{csp-def}
\end{equation}
where $\xi^{k}\in\partial g_{i(k)}(x^{k})$ is arbitrary, $\lambda_{k}$ is taken as in
(\ref{eq:POCS}) and $\left\{  i(k)\right\}  $ is cyclic. Of course, in the
linear case this method coincides with POCS.\medskip

\textbf{2.} \underline{\textbf{Superiorization}}

Superiorization is a recently introduced methodology which gains increasing interest and recognition,
as evidenced by the dedicated special issue entitled:
\textquotedblleft Superiorization: Theory and Applications\textquotedblright, in the journal \textit{Inverse Problems} \cite{chj17}.
The state of current research on superiorization can best be appreciated from the \textquotedblleft
Superiorization and Perturbation Resilience of Algorithms: A Bibliography
compiled and continuously updated by Yair Censor\textquotedblright \cite{Censor_sup-page}. In addition, \cite{herman-review-sm}, \cite{weak-strong15} and \cite[Section 4]{rm15} are recent reviews of interest.

This methodology is heuristic and its goal is to find certain good, or superior, solutions to optimization problems. More precisely, suppose that we want to solve a certain optimization problem, for example, minimization of a convex function under constraints (below we focus on this optimization problem because it is relevant to our paper; for an approach which considers the superiorization methodology in a much broader form, see \cite[Section 4]{rm15}). Often, solving the full problem can be rather demanding from the computational point of view, but solving part of it, say the feasibility part (namely, finding a point which satisfies all the constraints) is, in many cases, less demanding. Suppose further that our algorithmic scheme which solves the feasibility problem is perturbation resilient, that is, it converges to a solution of the feasibility problem despite perturbations which may appear in the algorithmic steps due to noise, computational errors, and so on.

Under these assumptions, the superiorization methodology claims that there is an advantage in considering perturbations in an active way during the performance of the scheme which tries to solve the feasibility part. What is this advantage? It may simply be a solution (or an approximation solution) to the feasibility problem which is found faster thanks to the perturbations; it may also be a feasible solution $x'$ which is better than (or superior) feasible solutions $x$ which would have been obtained without the perturbations, where we measure this \textquotedblleft superiority \textquotedblright with respect to some given cost/merit function $\phi$, namely we want to have $\phi(x')\leq \phi(x)$ (and hopefully $\phi(x')$ will be  much smaller than $\phi(x)$).

Since our original optimization problem is the minimization of some convex function, we may, but not obliged to, take $\phi$ to be that function, and we can combine a feasibility-seeking step (a step aiming at finding a solution to the feasibility problem) with a perturbation which will reduce the cost function (such a perturbation can be chosen or be guessed in a non-ascending direction, if such a direction exists:  see Definition \ref{eq:nonascend} and Algorithm \ref{Algorithm:Yair-superiorization} below). We note that the above-mentioned assumption that the algorithmic scheme which solves the feasibility part is perturbation resilient often holds in practice: for example, this is the case for the schemes considered in \cite{bdhk07, cr13, hgcd12}.

\begin{definition}\label{eq:nonascend}
Given a function $\phi:\Delta\subseteq%
\mathbb{R}
^{n}\rightarrow%
\mathbb{R}
$ and a point $x\in\Delta$, we say that a vector $d\in%
\mathbb{R}
^{n}$ is \texttt{non-ascending} \texttt{for }$\phi$\texttt{ at }$x$ if
$\left\Vert d\right\Vert \leq1$ and there is a $\delta>0$ such that%
\begin{equation}
\text{for all }\lambda\in\left[  0,\delta\right]  \text{ we have }\left(
x+\lambda d\right)  \in\Delta\text{ and }\phi\left(  x+\lambda d\right)
\leq\phi\left(  x\right)  . %
\end{equation}
\end{definition}

Observe that one option of choosing the perturbations, in order to steer the algorithm to a superior feasible point with respect to $\phi$, is along $-\nabla\phi$, (when $\phi$ is convex and differentiable) but this is only one example and of course the scheme allows the usage of other direction.

The following pseudocode, which is a small modification of a similar algorithm mentioned in \cite{hgcd12},  illustrates one option to perform the perturbations when applying the superiorizaton methodology.

\begin{algorithm}
\label{Algorithm:Yair-superiorization}$\left.  {}\right.  $

\textbf{Initialization:} Select an arbitrary starting point\textit{ }$x^{0}\in\mathbb{R}^{n}$, a positive integer $N$, an integer $\ell$, a sequence $(\eta_{\ell})_{\ell=0}^\infty$ of positive real numbers which is strictly decreasing to zero (for example, $\eta_{\ell}=a^\ell$ where $a\in(0,1)$) and a family of algorithmic
operators $(P_k)_{k=0}^{\infty}$.

\textbf{Iterative step:}

\textbf{set} $k=0$

\textbf{set} $x^{k}=x^{0}$

\textbf{set} $\ell=-1$

\textbf{repeat until a stopping criterion is satisfied (see Section \ref{sec:GeneralOpt})}

$\qquad$\textbf{set} $m=0$

$\qquad$\textbf{set} $x^{k,m}=x^{k}$

$\qquad$\textbf{while }$m$\textbf{$<$}$N$

$\qquad\qquad$\textbf{set }$v^{k,m}$\textbf{ }to be a non-ascending vector for
$\phi$ at $x^{k,m}$

$\qquad$\textbf{$\qquad$set} \emph{loop=true}

$\qquad$\textbf{$\qquad$while}\emph{ loop}

$\qquad\qquad\qquad$\textbf{set $\ell=\ell+1$}

$\qquad\qquad\qquad$\textbf{set} $\beta_{k,m}=\eta_{\ell}$

$\qquad\qquad\qquad$\textbf{set} $z=x^{k,m}+\beta_{k,m}v^{k,m}$

$\qquad\qquad\qquad$\textbf{if }$z$\textbf{$\in$}$\Delta$\textbf{ and }%
$\phi\left(  z\right)  $\textbf{$\leq$}$\phi\left(  x^{k}\right)  $\textbf{
\textbf{then}}

$\qquad\qquad\qquad\qquad$\textbf{set }$m$\textbf{$=$}$m+1$

$\qquad\qquad\qquad\qquad$\textbf{set }$x^{k,m}$\textbf{$=$}$z$

$\qquad\qquad\qquad\qquad$\textbf{set }\emph{loop = false}

$\qquad$\textbf{set }$x^{k+1}$\textbf{$=$}$\boldsymbol{P}_k(x^{k,N})$

$\qquad$\textbf{set }$k=k+1$
\end{algorithm}

\section{A general projection scheme for convex
optimization\label{sec:GeneralOpt}}

In this section we present our scheme (Algorithm \ref{Algorithm:epsilon-scheme}) for solving convex optimization problems
by translating them into a sequence of convex feasibility problems
(\ref{Problem:k_CFP}) and then solving each of them by using some projection
method; since we are concerned with the general convex case, subgradient
projections are most likely to be used, but we can use any type of projections, for example orthogonal and Bregman projection, see e.g., \cite{CZ97}. There are two essential questions in
this scheme, the first is how to construct the sequence of convex feasibility
problems, meaning how to choose an $\varepsilon$ to update $t_{k}$, and the
other question is when to stop the procedure. For the latter question, that is, the stopping criterion, there are several options and the most poplar are maximum number of iterations (an upper bound on the maximum number should be specified in advance), or to check whether either $\|x^{k+1}-x^k\|$ or $|f(x^{k+1})-f(x^k)|$ are smaller than some given positive parameter.\\

For (\ref{Problem:1}) we denote $C_{i}:=\left\{  x\in%
\mathbb{R}
^{n}\mid g_{i}(x)\leq0\right\}  $, we assume that $\boldsymbol{C}:=\cap_{i\in I}C_{i}%
\neq\emptyset$ and for $t\in%
\mathbb{R}
$ we denote $C^{t}:=\left\{  x\in%
\mathbb{R}
^{n}\mid f(x)\leq t\right\}  $.

We would like to motivate our scheme (Algorithm \ref{Algorithm:epsilon-scheme}) by reviewing a natural extension of the
ART3+O \cite{ccmzkh10}, which was designed for linear problems, for the
general convex case. We will also show how the mathematical disadvantages of
ART3+O can be treated in our new scheme. ART3+O is based on the same
reformulation of the original optimization problem (\ref{Problem:1}) into a
feasibility problem (\ref{Problem:2}). Then the optimal level set value is
determined using a one-dimensional line search on $t_{k}$. In the original
work \cite{ccmzkh10} the \textit{Dichotomous} (bisection) line search
\cite[Chapter 8]{BSS06} was used (in practice a somewhat variant of the line search was used), but any line search could be applied.

Assume that a lower bound of $f$ is given and denote it by $f_{l}$. We denote by
$f^{h}$ the upper bound of $f$ and initialize it to $\infty$.
In \cite{ccmzkh10} there is also the use of a bisection scheme but for the linear case, and in what follows we generalize this scheme for the convex optimization setting (below $k$ is a natural number).
\newpage

\begin{algorithm}
[Bisection scheme]\label{Algorithm:Bisection}$\left.  {}\right.  $

\textbf{Initialization:} Solve the following CFP%
\begin{equation}
\text{find a point }x^{0}\in\boldsymbol{C}%
\end{equation}
set $f^{h}=f(x^{0})$ and $t_{0}=\left(  f_{l}+f^{h}\right)  /2$.\medskip

\textbf{Iterative step:} Given $t_{k-1}$, try to find a feasible solution $x^{k}%
\in\boldsymbol{C}\cap C^{t_{k-1}}$;

(i) If there exists a feasible solution, set $f^{h}=f(x^{k})$ and continue
with $t_{k}:=\left(  f_{l}+f^{h}\right)  /2$;\medskip

(ii) If there is no feasible solution, determined by a \textquotedblleft time-out\textquotedblright\ rule (meaning that a feasible point can
not be found in $n_{max}$ iterations; other alternatives might be \cite{kac91, Kiwiel96} and \cite{cl02}), then set $f_{l}=t_{k-1}$ and
continue with $t_{k}:=\left(  f_{l}+f^{h}\right)  /2$;\medskip

(iii) If $\left\vert f^{h} - f_{l} \right\vert \leq\gamma$ for small enough $\gamma>0$, then stop. A $\gamma$-optimal solution is obtained.
\end{algorithm}

Next we present our new scheme which we call the level set scheme for solving the
constrained minimization (\ref{Problem:1}). Let $\left\{  \varepsilon
_{k}\right\}  _{k=0}^{\infty}$ be some user chosen positive sequence, such
that $\sum_{k=0}^{\infty}\epsilon_{k}=\infty$. We choose $\epsilon_{k}%
=\max\{0.1|f(x^{k})|,0.1\}.$

\begin{algorithm}
[Level set scheme]\label{Algorithm:epsilon-scheme}$\left.  {}\right.  $

\textbf{Initialization:} Solve the following CFP%
\begin{equation}
\text{find a point }x^{0}\in\boldsymbol{C}%
\end{equation}
and set $t_{0}=f(x^{0})-\varepsilon_{0}$.\medskip

\textbf{Iterative step:} Given the current point $x^{k-1}$, try to find a
point $x^{k}\in\boldsymbol{C}\cap C^{t_{k-1}}$;

(i) If there exists a feasible solution, set $t_{k}=f(x^{k})-\varepsilon_{k}$
and continue.\medskip

(ii) If there is no feasible solution, then $x^{k-1}$ is an $\varepsilon_{k}%
$-optimal solution.\medskip
\end{algorithm}

\begin{remark}
Compared with the bisection strategy, infeasibility is detected only once,
just before we get the $\varepsilon_{k}$-optimal solution.\bigskip
\end{remark}

This level set scheme is quite general as it allows users to decide in
advance what projection method they would like to use in that scheme. For the
numerical results, we decided to apply the scheme with the following
variations of projection methods.

\textbf{1.} Each convex feasibility problem in the level set scheme is solved
via the \textit{Cyclic Subgradient Projections Method} (CSPM) (\ref{csp-def}) with $\lambda_k\in(1,2)$ over relaxation parameters.

\textbf{2.} Each convex feasibility problem in the level set scheme is solved
based on the superiorization methodology. By doing so we try to decrease the objective function value below $t_{k-1}$. Following the recent result of
\cite[Section 7]{cr13}, the methodology can be extended to convex and non-convex constraints. In general,
superiorization does not provide an optimality certificate, therefore, we
propose a sequential superiorization method where we decrease the sub-level
sets of the objective function $f$ according to the level set scheme.

\textbf{3.} In the first variation where CSPM is used to solve the resulting
feasibility problems, it may happen that the objective function only decreases
by some small $\varepsilon_k$ in each step $k$. Combining the previous ideas, if only small
steps are detected as progress, a perturbation along the negative gradient of
the objective is performed - just like in superiorization. That is, if
insufficient decrease is detected within a block of iterations, then the current
iterate is shifted by $x^{k}\leftarrow x^{k}-1.9\nabla f(x^{k})$. It is clear that this
is a heuristic step and does not guarantee that $f(x^{k-1}-1.9\nabla f(x^{k-1}))\leq f(x^{k-1})$, and so it can be revised by using an adaptive step-size rule for some positive $\alpha$ such that $f(x^{k-1}-\alpha\nabla f(x^{k-1}))\leq f(x^{k-1})$.

Let $c,s>0$ and $\left\{  \varepsilon_{k}\right\}  _{k=0}^{\infty}$ be a
sequence and small user chosen constants, such that $\sum_{k=0}^{\infty
}\epsilon_{k}=\infty$, and $\delta=0$; In addition, determine the size of each
block of iterations $BLOCK$, for example if we decide to run 1000 iterations
then $BLOCK=1000.$

\begin{algorithm}
\label{Algorithm:modified-epsilon-scheme} $\left.  {}\right.  $

\textbf{Initialization:} Let $\delta=0$ and $t_{-1}:=\infty$; Solve the
following CFP%
\begin{equation}
\text{find a point }x^{0}\in\boldsymbol{C}%
\end{equation}
and set $t_{0}=f(x^{0})-\varepsilon_{0}$.\medskip

\textbf{Iterative step:} At the $k$-th iterate compute $\left\vert
t_{k-2}-t_{k-1}\right\vert $;\medskip

If $\left\vert t_{k-2}-t_{k-1}\right\vert \leq c\varepsilon_{k-1}$, then
$\delta=\delta+1$.\medskip

If $\delta/BLOCK>s,$ then $\delta=0$ and $x^{k-1}\leftarrow x^{k-1}-1.9\nabla
f(x^{k-1}).$\medskip

Set $t_{k-1}=f(x^{k-1})-\varepsilon_{k-1}$ and try to find a solution $x^{k}\in\boldsymbol{C}\cap C^{t_{k-1}}$ to the
CFP.
\end{algorithm}

\begin{remark}
\textbf{Relation with previous work.} There are numerous approaches in
the literature on how to update $t_{k}$, and by that, the sub-level set of $f$ in
solving (\ref{Problem:k_CFP}), or how to transform (\ref{Problem:1}) to a
sequence of CFPs. Some of these schemes are Khabibullin \cite[English
translation]{Khabibullin87_1} and \cite[English translation]{Khabibullin87},
\textit{Cutting-planes methods} or \textit{localization methods}; see
\cite{Kelly60, em75, lnn95, gly96, Kiwiel96, gk99} and \cite{BV07, cl02}, Cegielski in \cite{cegielski99}
and also in \cite{cd02,cd03}, \textit{subgradient method for constrained
optimization}, see e.g. \cite{bm08}. For optimization problem (\ref{Problem:1}%
) with separable objective see De Pierro and Helou Neto \cite{dn09} and see also \cite{ccmzkh10}.
\end{remark}

\subsection{Convergence \label{sec:Convergence}}

Next we present the convergence proof of Algorithm \ref{Algorithm:epsilon-scheme}. We use different arguments
than those presented for other finite convergent projection methods, for example, to name but a few Khabibullin \cite[English translation]{Khabibullin87}, (see also
Kulikov and Fazylov \cite{kf89} and Konnov \cite[Procedure A.]{konnov98}) and
Iusem and Moledo \cite{im87}, De Pierro and Iusem \cite{di98} and Censor, Chen
and Pajoohesh \cite{ccp11}. These algorithms assume that the Slater Condition
holds, i.e., Definition \ref{Def:Slater}.

\begin{theorem}
\label{thm:P_k} Let $(P_{k})_{k=0}^{\infty}$ be a sequence of algorithmic
schemes (formally, each $P_{k}$ is a Turing machine). For every $k\in
\mathbb{N}\cup\{0\}$ the goal of $P_{k}$ is to solve the sub-problem (\ref{Problem:k_CFP}). It
produces, after a finite number of machine operations, an output, and then it
terminates. There are three possible cases for this output: if there exists a
solution to (\ref{Problem:k_CFP}) and the machine is able to find it before it passes a given
threshold (that is, before it performs a too large number of machine
operations, where this \textquotedblleft large number\textquotedblright\hspace{.1in}is fixed in the beginning), then this
output is a point $x^{k}$ which solves (\ref{Problem:k_CFP}); if there exists no solution to
(\ref{Problem:k_CFP}) and the machine is able to determine this case before it passes the
threshold, then the output is a string indicating that (\ref{Problem:k_CFP}) has no solution;
otherwise the output is a string indicating that the threshold has been
passed. In addition, if for some $k\in\mathbb{N}\cup\{0\}$ the algorithmic
scheme $P_{k}$ is able to find a point $x^{k}$ satisfying (\ref{Problem:k_CFP}), then a
positive number $\epsilon_{k}$ is produced (it may or may not depend on
$x^{k}$) and one defines $t_{k}:=f(x^{k})-\epsilon_{k}$. Assume further that
there exists a sequence $(\tilde{\epsilon}_{k})_{k=0}^{\infty}$ satisfying
$\sum_{k=1}^{\infty}\tilde{\epsilon}_{k}=\infty$ and having the property that
for every $k\in\mathbb{N}\cup\{0\}$, if $P_{k}$ is able to find a point
$x^{k}$ satisfying (\ref{Problem:k_CFP}) before passing the threshold, then $\epsilon_{k}%
\geq\tilde{\epsilon}_{k}$.
Under the above mentioned assumptions, Algorithm \ref{Algorithm:epsilon-scheme} terminates after a
finite number of machine operations, and, moreover, exactly one of the
following cases must hold:

\textbf{Case 1:}\label{item:P_0} The only algorithmic scheme that has been applied is
$P_{0}$ and either it declares that (\ref{Problem:k_CFP}) has no solution or it declares that
the threshold has been passed;\medskip

\textbf{Case 2:}\label{item:LastComputed} there exists $k\in\mathbb{N}\cup\{0\}$ such
that $P_{0},\ldots,P_{k}$ are able to solve (\ref{Problem:k_CFP}) before the threshold has
been passed and $P_{k+1}$ terminates by declaring that (\ref{Problem:k_CFP}) does not have a
solution. In this case $x^{k}$ is an $\epsilon_{k}$-approximate solution of
the minimization problem (\ref{Problem:2}).\medskip

\textbf{Case 3:}\label{item:threshold} there exists $k\in\mathbb{N}\cup\{0\}$ such that
$P_{0},\ldots,P_{k}$ are able to solve (\ref{Problem:k_CFP}) before the threshold has been
passed and $P_{k+1}$ terminates by declaring that the threshold has been passed;
\end{theorem}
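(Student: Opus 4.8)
The plan is to analyze the behavior of Algorithm \ref{Algorithm:epsilon-scheme} by tracking the sequence $(t_k)$ of level-set thresholds and arguing that the overall procedure cannot run forever. The key structural observation is that the algorithm is a loop that, at each stage $k$, calls the machine $P_k$ to attempt to solve the CFP (\ref{Problem:k_CFP}) with the current threshold $t_{k-1}$; by hypothesis each $P_k$ halts after finitely many machine operations, returning exactly one of three outputs. So the only way termination could fail is if the loop itself executes infinitely often, which happens precisely when every $P_k$ succeeds in finding a feasible point $x^k$ before passing its threshold. I would therefore structure the argument as: (i) observe that the trichotomy on the output of each $P_k$ immediately produces the trichotomy Case 1 / Case 2 / Case 3 in the statement, and these three cases are mutually exclusive by construction; (ii) rule out the possibility that $P_k$ succeeds for every $k$, thereby forcing termination after finitely many steps.

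First I would handle the easy bookkeeping. If $P_0$ already fails — either declaring (\ref{Problem:k_CFP}) infeasible or reporting a threshold pass — we are in Case 1 and the algorithm stops immediately. Otherwise $P_0$ succeeds and the loop continues; letting $k$ be the first index at which $P_{k+1}$ does not succeed, the nature of that failure splits into Case 2 (declares infeasibility) or Case 3 (threshold passed). In Case 2, the infeasibility of (\ref{Problem:k_CFP}) at level $t_k = f(x^k)-\epsilon_k$ means there is no point of $\boldsymbol{C}$ with objective value at most $t_k$, i.e. $f(x) > f(x^k)-\epsilon_k$ for all $x\in\boldsymbol{C}$; since $x^k\in\boldsymbol{C}$ this exactly says $x^k$ is an $\epsilon_k$-approximate solution of (\ref{Problem:2}), which is the assertion to be verified. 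So the three enumerated cases are complete and disjoint, and the only remaining task — the heart of the proof — is to show the infinite-success scenario is impossible.

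The main obstacle, then, is the termination argument, and this is where the summability hypothesis $\sum_{k=1}^\infty \tilde\epsilon_k = \infty$ enters. Suppose for contradiction that $P_k$ succeeds for every $k$. Then $t_k = f(x^k)-\epsilon_k$ is defined for all $k$, and since $x^{k+1}$ is required to satisfy $f(x^{k+1})\leq t_k = f(x^k)-\epsilon_k$, I get the telescoping estimate
\begin{equation}
f(x^{k+1}) \leq f(x^0) - \sum_{j=0}^{k}\epsilon_j \leq f(x^0) - \sum_{j=0}^{k}\tilde\epsilon_j,
\end{equation}
using $\epsilon_j\geq\tilde\epsilon_j$ at each successful step. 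Letting $k\to\infty$, the right-hand side tends to $-\infty$ because the tail sum diverges, so $f(x^{k})\to-\infty$. This contradicts the standing assumption that the optimal value $t^\ast$ of (\ref{Problem:2}) is attained and finite, since every $x^k$ lies in $\boldsymbol{C}$ and hence $f(x^k)\geq t^\ast$. Therefore the infinite-success scenario cannot occur, some $P_{k+1}$ must fail, and the algorithm terminates after finitely many machine operations in one of Cases 2 or 3 (or Case 1 if the very first call already failed).

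The one point requiring care is that the telescoping bound only uses successful steps, so I must make sure the indexing of $\tilde\epsilon_j$ matches the lower bounds $\epsilon_j\geq\tilde\epsilon_j$ that the hypothesis guarantees precisely on the successful iterations; since in the infinite-success scenario every step is successful, this causes no difficulty, and the divergence of $\sum\tilde\epsilon_k$ does the rest.
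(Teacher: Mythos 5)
Your proposal is correct and follows essentially the same route as the paper's proof: the same case analysis reducing everything to ruling out the infinite-success scenario, the same verification that infeasibility at level $t_k=f(x^k)-\epsilon_k$ makes $x^k$ an $\epsilon_k$-approximate solution, and the same telescoping argument using $\epsilon_j\geq\tilde\epsilon_j$ and $\sum_j\tilde\epsilon_j=\infty$ to contradict the finiteness of $t^\ast$. The only cosmetic difference is that you telescope $f(x^{k+1})$ directly to force $f(x^k)\to-\infty$, whereas the paper telescopes $t_k$ to get $t_k<t^\ast$ for large $k$ and then contradicts the definition of $t^\ast$ via $f(x^{k+1})\leq t_k$.
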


\begin{proof}
A simple verification shows that the three cases mentioned above are mutually disjoint, and therefore at
most one of them can hold. Hence it is sufficient to show that at least one of
these cases holds. The level-set scheme starts at $k=0$. According to our
assumption on $P_{0}$ (and on any other algorithmic scheme), either it is able
to solve (\ref{Problem:k_CFP}) before passing the threshold, or it is able to show before
passing the threshold that (\ref{Problem:k_CFP}) does not have any solution, or it passes the
threshold before being able to determine whether (\ref{Problem:k_CFP}) has or does not have
any solution. If either the second or the third cases holds, then we are in
the first case (Case 1) mentioned by the theorem, and
the proof is complete (the number of machine operations done on both cases is
finite by the assumption on $P_{0}$). Hence from now on we assume that $P_{0}$
is able to solve (\ref{Problem:k_CFP}) before passing the threshold.

According to the level-set scheme definition, since we assume that $P_{0}$ was
able to solve (\ref{Problem:k_CFP}), we should now consider $P_{1}$. Either $P_{1}$ finds a
solution $x^{1}$ to (\ref{Problem:k_CFP}) before passing the threshold, or it is able to show
before passing the threshold that (\ref{Problem:k_CFP}) does not have any solution, or it
passes the threshold before being able to determine whether (\ref{Problem:k_CFP}) has or does
not have any solution. In the second case we are in Case
2 of the theorem and in the third case we are in Case 3 of the theorem. Hence in the second
and third cases the proof is complete (up to the verification that in the
second case $x^{k}$ is an $\epsilon_{k}$-optimal solution: see the next
paragraph), and so we assume from now on that $P_1$ finds a solution to (\ref{Problem:k_CFP}) before passing the threshold. By continuing this reasoning it can be shown by induction that several subcases
can hold: either any $P_{k}$, $k\in\mathbb{N}\cup\{0\}$, is able to solve
(\ref{Problem:k_CFP}) before passing the threshold, or there exists a minimal $k\in
\mathbb{N}\cup\{0\}$ such that any $P_{j}$, $j\in\{0,\ldots,k\}$ is able to
solve (\ref{Problem:k_CFP}) before passing the threshold but $P_{k+1}$ either shows that (\ref{Problem:k_CFP})
does not have any solution or $P_{k+1}$ passes the threshold before being able
to determine whether (\ref{Problem:k_CFP}) has a solution or does not have any solution. In
the second subcase we are in Case 2 of the
theorem, and in the third subcase we are in Case 3 of the theorem. In both subcases the accumulating machine operations is, of course, finite, since it is the sum of the finitely many machine operations
done by each of the algorithmic schemes $P_{j}$, $j\in\{0,1,\ldots,k+1\}$.

In the third subcase the proof is complete but in the second subcase we also need to
show that $x^{k}$ is an $\epsilon_{k}$-optimal solution. Indeed, suppose that
this subcase holds. Then $C\cap C^{t_{k}}=\emptyset$. Since a basic assumption of
the paper is that the set of minimizers of $f$ over $C$ is non-empty, there
exists $x^{*}\in C$ satisfying $f(x^{*})=t^{*}:=\inf\{f(x): x\in C\}$. It must
be that $t^{*}>t_{k}$ because otherwise we would have $f(x^{*})=t^{*}\leq
t_{k}$, i.e., $x^{*}\in C\cap C^{t_{k}}$, a contradiction. Because $x^{k}\in
C$ one has $t^{*}\leq f(x^{k})$. Hence $t^{*}\leq f(x^{k})=t_{k}+\epsilon
_{k}<t^{*}+\epsilon_{k}$ and therefore $|f(x^{k})-t^{*}|<\epsilon_{k}$. In
other words, $x^{k}$ is an $\epsilon_{k}$-optimal solution, as required.

Therefore it remains to deal with the first subcase mentioned earlier in which
each $P_{k}$, $k\in\mathbb{N}\cup\{0\}$, is able to solve (\ref{Problem:k_CFP}) before passing
the threshold. Assume to the contrary that this subcase holds. Then for
each $k\in\mathbb{N}\cup\{0\}$ the point $x^{k}$ and the numbers $\epsilon
_{k}$ and $t_{k}$ are well-defined and their definitions imply (by induction)
that when $k\geq2$, then
\begin{equation}
t_{k}=f(x^{k})-\epsilon_{k}\leq t_{k-1}-\epsilon_{k}\leq\ldots\leq t_{0}%
-\sum_{j=1}^{k}\epsilon_{j}\leq t_{0}-\sum_{j=1}^{k}\tilde{\epsilon}_{j}.
\label{t_k<t_0-sum}%
\end{equation}
Because $t^{\ast}=f(x^{\ast})\in\mathbb{R}$ and since, according to our
assumption, $\sum_{j=1}^{\infty}\tilde{\epsilon}_{j}=\infty$, for large enough
$k\in\mathbb{N}$ we have $t_{0}-t^{\ast}<\sum_{j=1}^{k}\tilde{\epsilon}_{j}$.
By combining this with (\ref{t_k<t_0-sum}) it follows that
\begin{equation}
t_{k}\leq t_{0}-\sum_{j=1}^{k}\tilde{\epsilon}_{j}<t^{\ast}. \label{t_k<v_0}%
\end{equation}
It must be that one of the algorithmic schemes $P_{j}$, $j\in\{1,\ldots,k+1\}$
will fail to solve (\ref{Problem:k_CFP}) (either by passing the threshold or by
determining that (\ref{Problem:k_CFP}) does not have any solution), since if this is not true,
then in iteration $k+1$ a solution $x^{k+1}$ to (\ref{Problem:k_CFP}) will be found by
$P_{k+1}$. Now, because $x^{k+1}$ solves (\ref{Problem:k_CFP}) we have $f(x^{k+1})\leq t_{k}$.
Hence it follows from (\ref{t_k<v_0}) that $f(x^{k+1})<t^{\ast}$, a
contradiction to the definition of $t^{\ast}$. This contradiction shows that
the subcase mentioned earlier in which each $P_k$, $k\in \mathbb{N}\cup\{0\}$ is able to solve (\ref{Problem:k_CFP}) before passing the threshold, cannot occur.\bigskip
\end{proof}

\begin{remark}
If for some given $\epsilon>0$ the sequence $\{\epsilon_{k}\}_{k=0}^{\infty}$
satisfies $\epsilon_{k}<\epsilon$ for all $k\in\mathbb{N}$ sufficiently large,
then the theorem ensures, in the third case mentioned in it, that the point
$x^{k}$ will be an $\epsilon$-approximate solution.
\end{remark}

\begin{remark}
An illustration of the condition needed in Theorem \ref{thm:P_k} is to let $\epsilon
_{k}:=\max\{0.1,0.1 |f(x^{k})|\}$, as done in the numerical simulations
(Section \ref{sec:Numerical_Experiments}). In this case $\epsilon_{k}\geq\tilde{\epsilon}_{k}:=0.1$ for all
$k\in\mathbb{N}\cup\{0\}$ for which $P_{k}$ is able to solve (\ref{Problem:k_CFP}) before
passing the threshold, and, in addition, $\sum_{k=0}^{\infty} \tilde{\epsilon
}_{k}=\infty$, as required. However, if one merely defines $\epsilon_{k}:=0.1
|f(x^{k})|$ instead of defining $\epsilon_{k}:=\max\{0.1,0.1 |f(x^{k})|\}$,
or, more generally, if one uses algorithmic schemes $P_{k}$ which, for every
$k\in\mathbb{N}\cup\{0\}$, are able to solve (\ref{Problem:k_CFP}) before passing the
threshold, and if $\sum_{k=0}^{\infty}\epsilon_{k}<\infty$, then it may happen
that none of the values $f(x^{k})$ approximate well the optimal value $t^{*}$.

Indeed, consider $f(x):=x^{2}-100$, $x\in C:=\mathbb{R}$. Denote $x^{0}:=\sqrt{500}$.
Suppose that for each $k\in\mathbb{N}$ our schemes $P_{k}$ find a point
$x^{k}$ satisfying $f(x^{k})=t_{k-1}$, namely $x^{k}=\sqrt{100+t_{k-1}}$ (we
assume that $P_{k}$ can represent numbers in an algebraic way which allows it
to store square roots without the need to represent them in a decimal way;
such schemes can be found in the scientific domains called \textquotedblleft
computer algebra\textquotedblright, \textquotedblleft exact numerical
computation\textquotedblright, and \textquotedblleft symbolic
computation\textquotedblright). Let $\epsilon_{0}:=0.1|f(x^{0})|$. Since
$f(x^{0})=400>0$ we have $t_{0}=f(x^{0})-\epsilon_{0}=0.9\cdot400=360$. Now we
need to find a point $x^{1}$ satisfying $f(x^{1})=360$, i.e.,
\[
(x^{1})^{2}-100=360=0.9\cdot f(x^{0})=0.9((x^{0})^{2}-100)=0.9(x^{0})^{2}-90
\]
and thus $x^{1}=\sqrt{10+0.9(x^{0})^{2}}$. By induction
$x^{k}=\sqrt{10+0.9(x^{k-1})^{2}}$, $f(x^{k})=t_{k-1}$ and $\epsilon
_{k}:=0.1|f(x^{k})|$ for every $k\in\mathbb{N}$. In particular, we can see by
induction that $x^{k}\geq10$ for all $k$ and hence $\epsilon_{k}=0.1f(x^{k})$
and $t_{k}=f(x^k)-\epsilon_k=0.9f(x^{k})\geq0$ for all $k\in\mathbb{N}$. Therefore
$|t_{k}-t^{\ast}|\geq100$ and $|f(x^{k})-t^{\ast}|>100$ for all $k\in
\mathbb{N}$ and hence we neither have $\lim_{k\rightarrow\infty}%
f(x^{k})=-100=t^{\ast}$ nor $\lim_{k\rightarrow\infty}t_{k}=t^{\ast}$. It
remains to show that $\sum_{k=1}^{\infty}\epsilon_{k}<\infty$. Indeed, observe
that since $0<f(x^{k+1})\leq t_{k}$ for every $k\in\mathbb{N}\cup\{0\}$ and
since from (\ref{t_k<t_0-sum}) we have $\sum_{i=0}^{k}\epsilon
_{i}\leq t_{0}-t_{k}\leq t_{0}$, it follows that $\sum_{k=1}^{\infty}%
\epsilon_{k}\leq t_{0}$. Therefore $\sum_{k=1}^{\infty}\epsilon
_{k}<\infty$ as claimed.\bigskip
\end{remark}

\section{Numerical experiments \label{sec:Numerical_Experiments}}

In this section, we compare several variants of the two optimization schemes (Algorithms \ref{Algorithm:epsilon-scheme}
and \ref{Algorithm:modified-epsilon-scheme}) for some selected optimization problems. All solvers were tested against the
freely available library of convex quadratic programming problems stored in
the QPS format by Maros and M\'{e}sz\'{a}ros \cite{mm99} as well as clinical
cases from intensity modulated radiation therapy planning (IMRT) provided to
us by the German Cancer Research Center (DKFZ) in Heidelberg. The QPS problems
were parsed using the parser from the CoinUtils package \cite{CoinUtils} and consist of quadratic
objectives and linear constraints. The IMRT problem data is constructed using
a prototypical treatment planning system developed by the Fraunhofer ITWM and
consist of nonlinear convex objectives and constraints.

\begin{remark}
\label{remark:conv} It is clear that from the mathematical point of view only
finite convergence projection methods can be applied in each iterative step.
However, numerical experiments show that even asymptotically convergent
algorithms can be used, when the stopping rule is chosen in an educated way.
For further finite convergence methods see \cite{pm79, fuku82, mph81}.
\end{remark}

The algorithms were implemented in C++. As solvers for the feasibility
problem, we implemented the finite convergence variants of the Cyclic Subgradient Projections Method (CSPM) and
the Algebraic Reconstruction Technique 3 (ART3+) and also their regular version with standard stopping rule, see the beginning of Section \ref{sec:GeneralOpt} and Remark \ref{remark:conv}. The superiorized versions of these methods simply use the objective function of the optimization problem as
a merit function to decrease. Although the superiorized versions of CSPM and
ART3+ preformed surprisingly well in terms of the objective function value
they obtained, the solutions were in almost all cases far from the optimum.
Table \ref{tab:solver_variants} lists the variants of the level set and bisection
schemes that are compared:

\medskip\begin{table}[htp]%
\begin{tabular}
[c]{|l|l|}\hline
Scheme variant & Abbreviation\\\hline
Level set (Alg. \ref{Algorithm:epsilon-scheme}) with CSPM & ls\_cspm\\\hline
Level set (Alg. \ref{Algorithm:epsilon-scheme}) with ART3+ & ls\_art3+\\\hline
Accelerated level set (Alg. \ref{Algorithm:modified-epsilon-scheme}) with
CSPM & ls\_acc\_cspm\\\hline
Level set (Alg. \ref{Algorithm:epsilon-scheme}) with superiorized CSPM &
ls\_sup\_cspm\\\hline
Level set (Alg. \ref{Algorithm:epsilon-scheme}) with superiorized ART3+ &
ls\_sup\_art3+\\\hline
Accelerated level set (Alg. \ref{Algorithm:modified-epsilon-scheme}) with
superiorized CSPM & ls\_acc\_sup\_cspm\\\hline
Bisection (Alg. \ref{Algorithm:Bisection}) with CSPM & bis\_cspm\\\hline
Bisection (Alg. \ref{Algorithm:Bisection}) with ART3+ & bis\_art3+\\\hline
Accelerated Bisection with CSPM & bis\_acc\_cspm\\\hline
Bisection (Alg. \ref{Algorithm:Bisection}) with superiorized CSPM &
bis\_sup\_cspm\\\hline
Bisection (Alg. \ref{Algorithm:Bisection}) with superiorized ART3+ &
bis\_sup\_art3+\\\hline
Accelerated Bisection with superiorized CSPM & bis\_acc\_sup\_cspm\\\hline
\end{tabular}
\caption{Overview of all tested schemes.}%
\label{tab:solver_variants}%
\end{table}\medskip

The bisection schemes were accelerated in the same way as in Algorithm
\ref{Algorithm:modified-epsilon-scheme}, despite the fact that this is a heuristics which is not guaranteed to converge, we decided to test and add it to our comparison. To determine whether a feasible solution exists, we set a maximum number of 1000 iterations for each of the
feasibility solvers. In Algorithm \ref{Algorithm:Bisection}(iii) we choose $\gamma=10^{-5}$. If no feasible solution is found after 1000 projections,
it is assumed that none exists. For the choice of $\varepsilon_{k}$ we used
multiplicative update rule ($\varepsilon_{k}=0.1|f(x^{k})|$) if the absolute value of the objective function is
greater than $1$ and subtraction update rule ($\varepsilon_{k}=0.1$) otherwise.

\subsection{IMRT cases descriptions}

Given fixed irradiation directions, the objective in IMRT optimization is to
determine a treatment plan consisting of an energy fluence distribution to
create a dose in the patient to irradiate the tumor as homoegeneously as
possible while sparing critical healthy organs \cite{KueMonSche09}. Figure
\ref{fig:imrt} shows irradiation directions and some energy fluence maps for a paraspinal tumor case.

\begin{figure}[H]
\centering
\includegraphics[width=0.7\textwidth,height=0.6\textheight,keepaspectratio]
{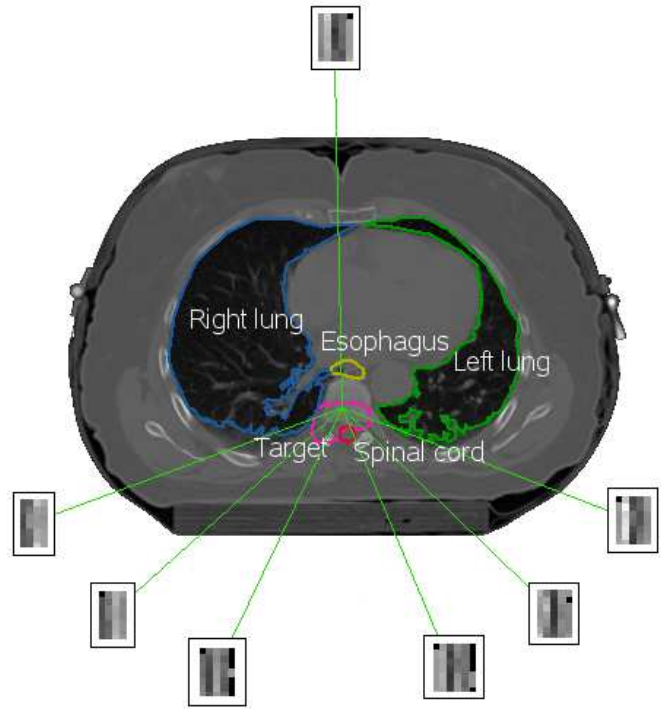}
\caption{The gantry moves around the couch on which the patient lies. The couch position
may also be changed to alter the beam directions.}%
\label{fig:imrt}%
\end{figure}

The problem is multi-criteria in nature and numerical optimization
problems are often a weighted sum scalarization of the multiple
objectives involved. IMRT optimization problems can be formulated
so that they are convex. For this work, we selected nine
head-and-neck cancer patients and posed the same optimization
formulations for each to ensure comparability, see Figure
\ref{fig:Head-Neck} for two of the nine patients. We then chose
four types of scalarization weights to determine four treatment
plans for each patient, each with different distinct solution
properties. Overall, this resulted in 36 optimization problems.
The following list describes the different scalarizations.\\

\textbf{1. }High weights on tumor volumes, low weights on healthy organs;

\textbf{2. }High weights on tumor volumes and brain stem;

\textbf{3. }High weights on tumor volumes and spinal cord;

\textbf{4. }High weights on tumor volumes and parotis glands.\\

In order to numerically optimize the treatment plans, we define
the energy fluence distribution - the variables of the optimization problem - as a vector $x\in\mathbb{R}^{n}$ (typically $n\approx 10^3$)
and assume that the resulting radiation dose in the patient is
given by $d=Dx\in\mathbb{R}^{m}$ (typically $m\approx 10^6$), where the entries $D_{ij}$ of
the so-called dose matrix $D\in\mathbb{R}^{m\times n}$ contain the
information of how much radiation is deposited in voxel $i$ of the
patient body by a unit amount of energy emitted by a small area
$j$ on the beam surface. That is, the dose in each voxel $i$ is given by
$ d_i(x):= \sum_{j=1}^n D_{ij} x_j $. To achieve a homogeneous dose in a tumor
volume given by voxel indices $\mathcal{T}$, we use functions to
minimize the amount of under-dosage below a prescribed dose $R$,
\[
f_{\text{under}}(x):=\left(  \left\vert \mathcal{T}\right\vert ^{-1}\sum
_{i\in\mathcal{T}}\max(0,R-d_{i}(x))^{2}\right)  ^{\frac{1}{2}},
\]
and, symmetrically the over-dosage above a given prescription. This is done
using two functions to provide better control over both aspects. These
objectives are also constrained from above, resulting in nonlinear but convex
constraints. Dose in risk organs given by voxel indices $\mathcal{R}$ is
minimized by norms of the dose in those organs:
\[
f_{\text{norm}}(x):=\left(  \left\vert \mathcal{R}\right\vert ^{-1}\sum
_{i\in\mathcal{R}}d_{i}^{p}(x)\right)  ^{\frac{1}{p}},
\]
where we used $p=2$ and $p=8$, depending on whether the organ is more
sensitive to the general amount of radiation (e.g. parotis glands) or the
maximal dose (e.g. spinal cord) received. More relevant data which is typical and standard to IMRT and in particular for the implementation of our scheme, i.e., the constrains set $\boldsymbol{C}$, can be found in \cite{KueMonSche09}, which also includes details on numerical optimization in IMRT planning; see also the works \cite{cekb05, aygk1505}. Note that the IMRT problem formulations here do not contain any linear constraints in our case, so that in the analysis, ART3+ is
omitted as it is identical to CSPM in this case.

\begin{figure}[htp]
\centering \subfigure[Head-Neck patient 1]{
\includegraphics[width=0.6\textwidth,height=0.23\textheight,keepaspectratio]{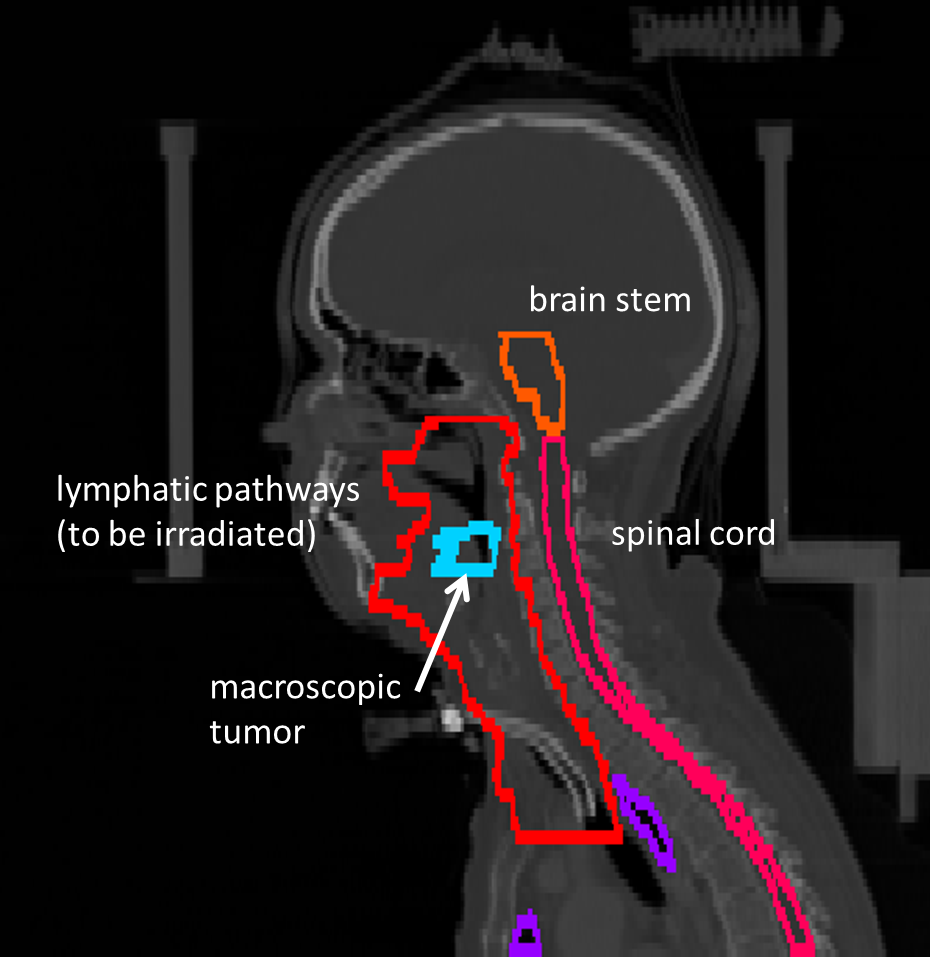}
\label{fig:imrt_case_hnaa_sagital}
\includegraphics[width=0.6\textwidth,height=0.23\textheight,keepaspectratio]{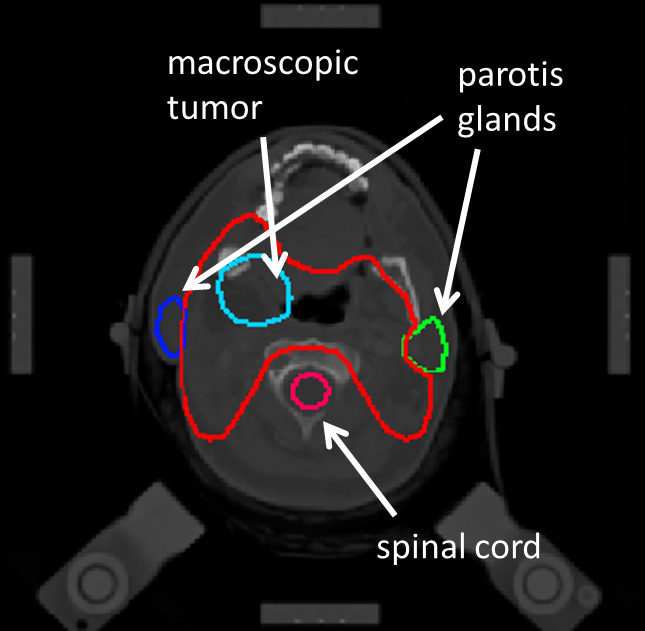}
\label{fig:imrt_case_hnaa_trans} } \subfigure[Head-Neck patient 2]{
\includegraphics[width=0.6\textwidth,height=0.23\textheight,keepaspectratio]{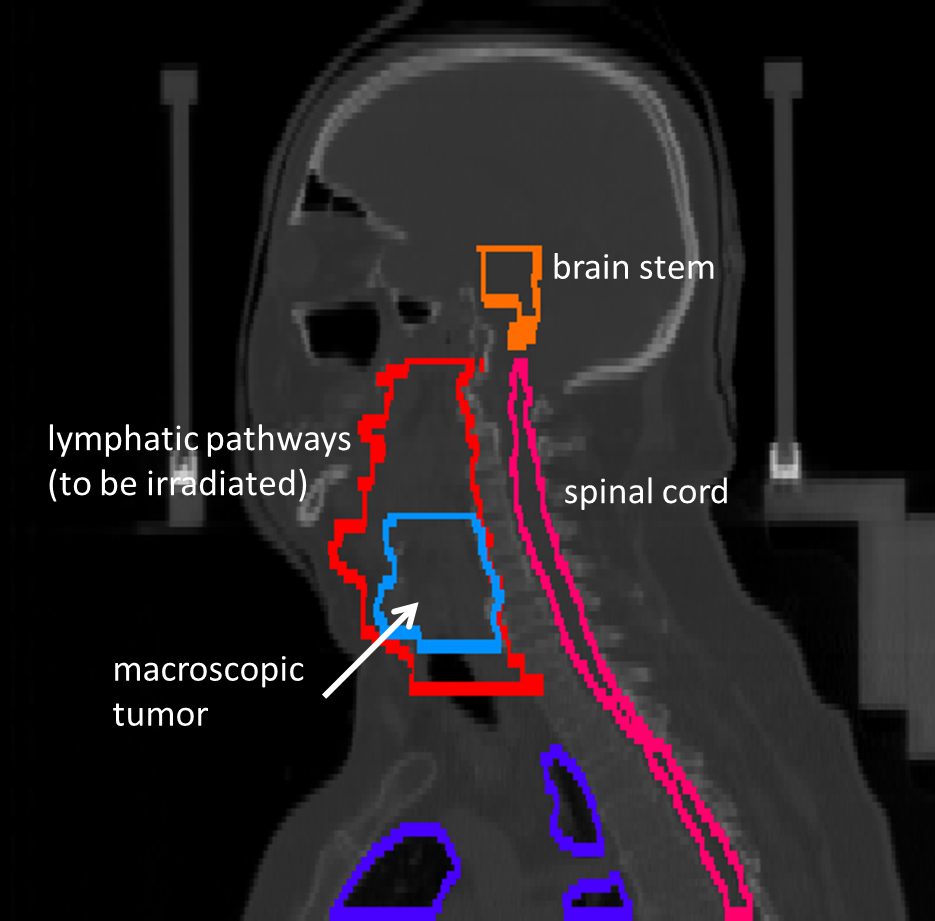}
\label{fig:imrt_case_hnbb_sagital}
\includegraphics[width=0.6\textwidth,height=0.23\textheight,keepaspectratio]{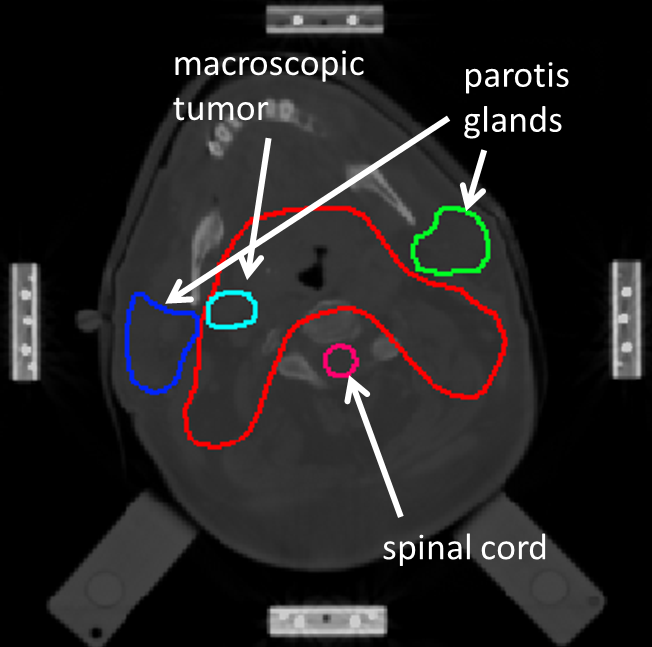}
\label{fig:imrt_case_hnbb_trans} }\caption{Two of the nine
patients. Highlighted are some tumor volumes (lymphatic pathways
and the macroscopic tumor volume) and some healthy
structures to be spared.}%
\label{fig:Head-Neck}%
\end{figure}
\subsection{Quality evaluation of the solutions}

All of the 36 IMRT optimization problems could be solved by all variants of
the schemes. This was not the case for the QPS problems: of the 101 problems
tested in the library, only 50 problems could be solved by all of the variants of the
schemes. There are two reasons why for the other 51 cases, the projection
methods were unable to find an initial feasible solution. The first reason is that most of these QPS problems are indeed infeasible. The second reason is, that in \cite{mm99}, the primal infeasibility stopping criterion is determined as $\|Ax-b\|/(1-\|b\|)<10^{-8}$ (where $A$ and $b$ are part of the QPS problems constraints and $\|b\|<1$) (also combined with an additional stopping criterion for the dual infeasibility), while in our implementations, we choose the maximum number of iterations, such as in \cite{ccmzkh10}, denoted there by $Q$, to be the stopping rule. As can be seen, these two stopping criterion are different. It turn out that even when the number of iterations was increased to $10,000$, still the CSPM did not make a difference and hence these problems were declared infeasible.
In general, the behaviour of projection methods in the inconsistent case (infeasibility) have attracted many researchers and the subject is not fully explored. Some of the results in this area, state that in the case of infeasibility there is a cyclic convergence while for others methods, mainly simultaneous ones, there is convergent to some point that minimizes the norm of the infeasibilities. For further details on the above, the readers are refereed to the works Gubin, Polyak and Raik\'{\i}s \cite[Theorem 2]{gpr67}, Censor and Tom \cite {ct03}, the book of Chinneck \cite{Chinneck08} and the many references therein. Moreover, in the recent result of Censor and Zur \cite{cz16}, superiorization is used for the inconsistent linear case and it is shown that the generated sequence converges to a point that minimizes a proximity function which measures the linear constraints violation.

The following analysis concerning the QPS problems is restricted to those problems that could be solved by all variants. We report the required total number of projections and the total number of objective evaluations for each method as measures of numerical complexity, since these are independent of machine architecture or efficiency of implementation (parallelization or other software acceleration techniques). To measure the quality of the solutions, the following score $Q$ was calculated for each solver variant and each problem. Let $\hat{f}$ be the best objective the solver found and $f^{*} $ the best known objective value for the problem. We define
\[
Q :=
\begin{cases}
\hat{f}, & \mbox{if $f^*=0$}\\
\hat{f} - f^{*}, & \mbox{if $|f^*| \leq 1$}\\
(\hat{f} - f^{*}) / (|f^{*}|), & \mbox{else}
\end{cases}
\]
Thus, the close to 0, the better the score, a positive value for $Q $ measures
a deviation from optimality. Tables
\ref{table:SummaryQualityScoresAlgorithms_QPS} and
\ref{table:SummaryQualityScoresAlgorithms_IMRT} show some statistics for the
deviation from optimality for the solvers.

The median quality score of all optimization schemes for the problems that
could be solved are very good, meaning each solver can be expected to find the
optimal solution if the underlying projection method can find a feasible
starting point. The average is heavily skewed towards some outliers, i.e. instances for
which the algorithms needed many iterations -
especially for the QPS problems and in the bisection schemes for both problem
types. However, not all problems are solved very well, as the 90-th quantiles show: in 10\%
of all problem instances the algorithms did not produce a very good solution.

Based on these findings, the following questions are answered in the following subsections:

\textbf{1. }Do the accelerated versions of the schemes outperform the basic
versions in terms of quality and complexity?

\textbf{2. }Do the superiorized versions of the feasibility solvers outperform
their basic variants since the average deviations are lower for those solvers
with \textquotedblleft sup\textquotedblright?

\textbf{3. }Is the level set scheme better than the bisection scheme in terms
of quality and complexity?

\begin{table}[H]%
\begin{tabular}
[c]{|l|c|c|c|c|}\hline
Scheme variant & Average Q & Median Q & 10-th quantile Q & 90-th quantile
Q\\\hline
ls\_cspm & 0.44 & 0.05 & 0.00 & 0.66\\\hline
ls\_art3+ & 1.83 & 0.05 & 0.00 & 0.66\\\hline
ls\_acc\_cspm & 0.44 & 0.06 & 0.00 & 0.66\\\hline
ls\_sup\_cspm & 0.17 & 0.05 & 0.00 & 0.66\\\hline
ls\_sup\_art3+ & 0.14 & 0.06 & 0.00 & 0.66\\\hline
ls\_acc\_sup\_cspm & 0.20 & 0.05 & 0.00 & 0.66\\\hline
bis\_cspm & 7.22 & 0.04 & 0.00 & 1.10\\\hline
bis\_art3+ & 7.07 & 0.04 & 0.00 & 1.00\\\hline
bis\_acc\_cspm & 7.24 & 0.05 & 0.00 & 1.10\\\hline
bis\_sup\_cspm & 0.19 & 0.03 & 0.00 & 0.89\\\hline
bis\_sup\_art3+ & 0.16 & 0.02 & 0.00 & 0.83\\\hline
bis\_acc\_sup\_cspm & 0.21 & 0.05 & 0.00 & 0.89\\\hline
\end{tabular}
\caption{Quality scores of all tested algorithms over all 50 QPS problems that
could be solved by all variants. The statistics are taken over the 50 problem
runs, thus aggregating the results over all problems.}%
\label{table:SummaryQualityScoresAlgorithms_QPS}%
\end{table}

\begin{table}[H]%
\begin{tabular}
[c]{|l|c|c|c|c|}\hline
Scheme variant & Average Q & Median Q & 10-th quantile Q & 90-th quantile
Q\\\hline
ls\_cspm & 0.13 & 0.11 & 0.03 & 0.23\\\hline
ls\_acc\_cspm & 0.13 & 0.11 & 0.03 & 0.23\\\hline
ls\_sup\_cspm & 0.06 & 0.06 & 0.00 & 0.12\\\hline
ls\_acc\_sup\_cspm & 0.05 & 0.05 & 0.00 & 0.13\\\hline
bis\_cspm & 2.72 & 0.02 & 0.00 & 8.52\\\hline
bis\_acc\_cspm & 2.72 & 0.02 & 0.00 & 8.52\\\hline
bis\_sup\_cspm & 0.04 & 0.04 & 0.00 & 0.08\\\hline
bis\_acc\_sup\_cspm & 0.04 & 0.03 & 0.00 & 0.09\\\hline
\end{tabular}
\caption{Quality scores of all tested algorithms over all IMRT problems. ART3+
was omitted as the problem formulations did not contain any linear
constraints. The statistics are taken over the 36 problem runs, thus
aggregating the results over all problems.}%
\label{table:SummaryQualityScoresAlgorithms_IMRT}%
\end{table}

\subsection{Is the accelerated scheme better than the basic scheme?}

Only the CSPM variants for the level set scheme and the bisection scheme are
studied, since they are most promising candidates for each (in the bisection
scheme, there is no difference between CSPM and ART3+). The quality scores for
the level set scheme with CSPM ls\_cspm and the accelerated level set scheme
ls\_acc\_cspm were compared to see if there is a statistically significant
difference in the outcome of the methods. For the 50 QPS problems, the median
difference is 0, and the t-Test for two-tailed sample mean difference returns
a p-value of 0.472, indicating that there is no real difference between the
two versions. For the IMRT problems there was absolutely no difference in the
quality score between the two variants.

The results for the bisection scheme are even clearer. For the QPS problems,
the median difference is 0 and the average difference is -0.01. In no case
could the accelerated version produce a better objective score, and at worst,
it produced a loss of 0.28 objective score. A statistical test was not
performed for this case. Similar results were found for the IMRT problems.

As there is no difference in quality, the question remains whether the accelerated variants can be better in terms of  faster function decrease or number of constraint projections or function evaluations. For IMRT, there was no difference in running time or rate of decrease of the objective function: the behavior was identical for the base
and accelerated versions. For the QPS problems, however, the acceleration of
the level set schemes lead to a faster converging algorithm. The rate of
decrease of the objective function over all feasible solutions produced by the
scheme with the accelerated version can be up to 15 times the rate of the
basic version. The chart in Figure \ref{fig:speedupObjDecLsAccCspmVsLsCspm}
shows the frequencies of different speedup factors realized for the
accelerated level set scheme over the basic level set scheme. The bins on the
horizontal axis denote the multiplication factor of how much faster the
objective scores (in case of Figure
\ref{fig:speedupObjDecLsAccCspmVsLsCspmQPS}) decreased in the accelerated
cases. ``Frequency'' refers to the count of solver instances where the
multiplication factor was observed. It should be noted that the frequency for
the 0 bin is exactly the count of zero speedups, meaning that there were no cases where the accelerated version
was better in terms of faster function decrease.

\begin{figure}[H]
\centering
\subfigure[Level set scheme]{
\includegraphics[width=0.46\textwidth,height=0.3\textheight,keepaspectratio]{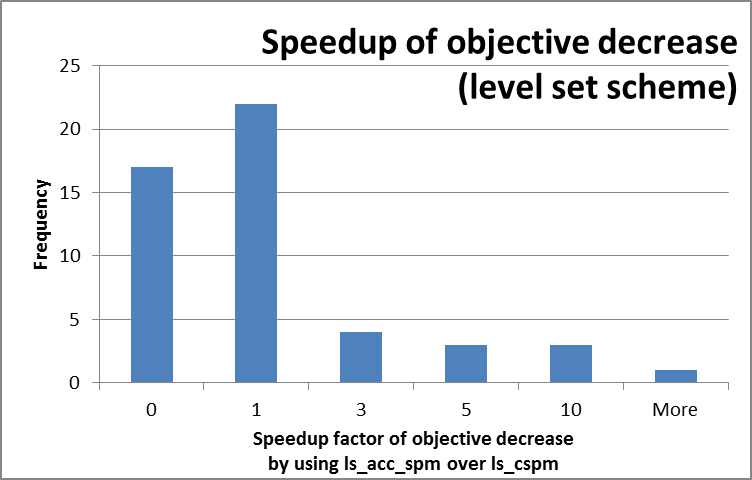}
\label{fig:speedupObjDecLsAccCspmVsLsCspm}
} \subfigure[Bisection scheme]{
\includegraphics[width=0.46\textwidth,height=0.3\textheight,keepaspectratio]{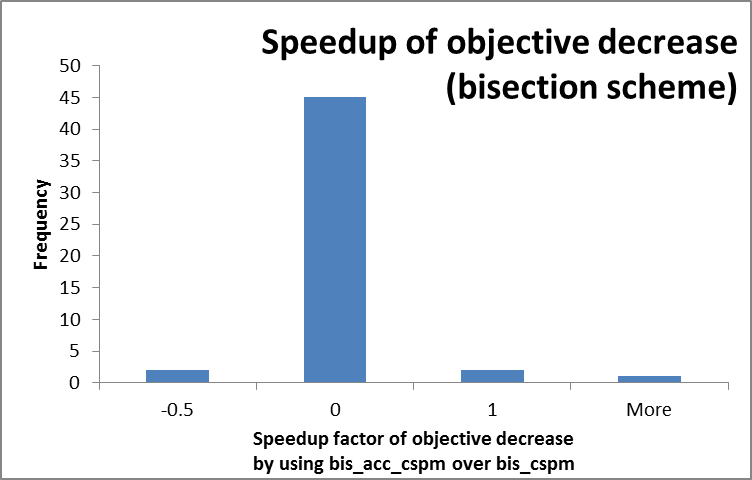}
}\caption{Factor of how much the objective function decreased faster in the
accelerated versions of the schemes (QPS problems). A speedup factor of 0 means that no difference in speed was observed. Negative speedups indicate that the
indicated scheme performed worse over the compared method.}%
\label{fig:speedupObjDecLsAccCspmVsLsCspmQPS}%
\end{figure}

For the bisection scheme and QPS problems, there is no difference in the rate
of objective decrease between accelerated and basic version. Therefore, for
certain problem types, the accelerated level set scheme using CSPM has a great
potential to speed up the rate of objective decrease and only causes a very
moderate increase in the number of projections required by the algorithm (only
about 1.5 times as many for 17 problems).

\subsection{Are the superiorized versions a good choice for the level set
scheme?}

The quality scores in Tables \ref{table:SummaryQualityScoresAlgorithms_QPS}
and \ref{table:SummaryQualityScoresAlgorithms_IMRT} seem to indicate that the
superiorized version outperform their basic variants in both schemes. However,
for the QPS problems, the t-tests for paired two sample mean comparisons
showed that there is, in fact, no statistically significant difference in the
means (p-values for two-tailed tests were 0.148 for the level set scheme
comparison between ls\_acc\_cspm and ls\_acc\_sup\_cspm and 0.292 for the
bisection scheme comparison between bis\_art3+ and bis\_sup\_art3+).
Nevertheless, for our test cases, the superiorized versions in the level set
scheme outperformed their basic counterparts in more cases: the superiorized
version ls\_sup\_acc\_cspm produced a quality score at least as good as the
basic version ls\_acc\_cspm in about 68\% of all cases, in 64\% it could
actually get a better score.

On the other hand, for the IMRT problems, the superiorized versions clearly
outperform the basic versions in terms of objective function scores.
Statistical tests are all significant up to a level of 0.0013 (p-value for
two-tailed tests of test for mean difference being 0). This clearly shows a
promising feature of superiorized algorithms: they are very often able to
obtain better solutions, even when used in an optimization framework. The
accelerated versions of the superiorized schemes, however, did not differ in
quality from their unaccelerated versions.

However, the superiorized versions require more projections and objective
evaluations than the normal versions (see Figures
\ref{fig:SpeedupCompLsSupAccCspmVsLsAccCspmQPS} and
\ref{fig:SpeedupCompLsSupCspmVsLsCspmIMRT}). In these charts, the horizontal
axis again denotes the multiplication factor by which the number of
projections or objective function evaluations of the basic versions would have
to be multiplied to be equal to the values for the superiorized versions. An increase factor of 0 means that the indicated scheme needed as many projections as the compared method. In
many instances, the superiorized versions required more than 10 times the
number of projections over the basic variant, leading to significantly higher
computation times.

\begin{figure}[htp]
\centering \subfigure[Increase in projections]{
\includegraphics[width=0.45\textwidth,height=0.3\textheight,keepaspectratio]{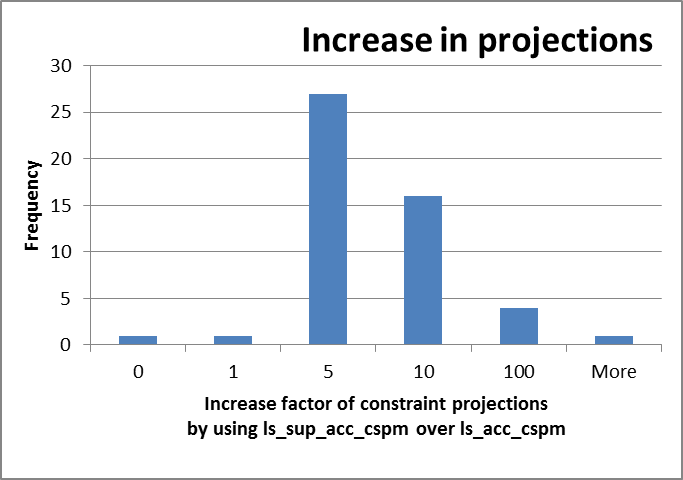}
} \subfigure[Increase in objective evaluations]{
\includegraphics[width=0.45\textwidth,height=0.3\textheight,keepaspectratio]{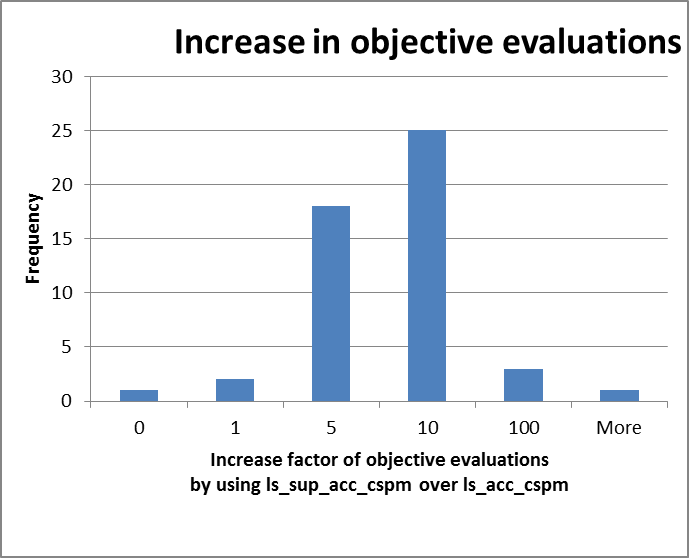}
}\caption{Factor of how much the complexity increases by using the
superiorized version of CSPM in the level set scheme (QPS problems). }%
\label{fig:SpeedupCompLsSupAccCspmVsLsAccCspmQPS}%
\end{figure}

\begin{figure}[htp]
\centering
\subfigure[Increase in projections]{
\includegraphics[width=0.45\textwidth]{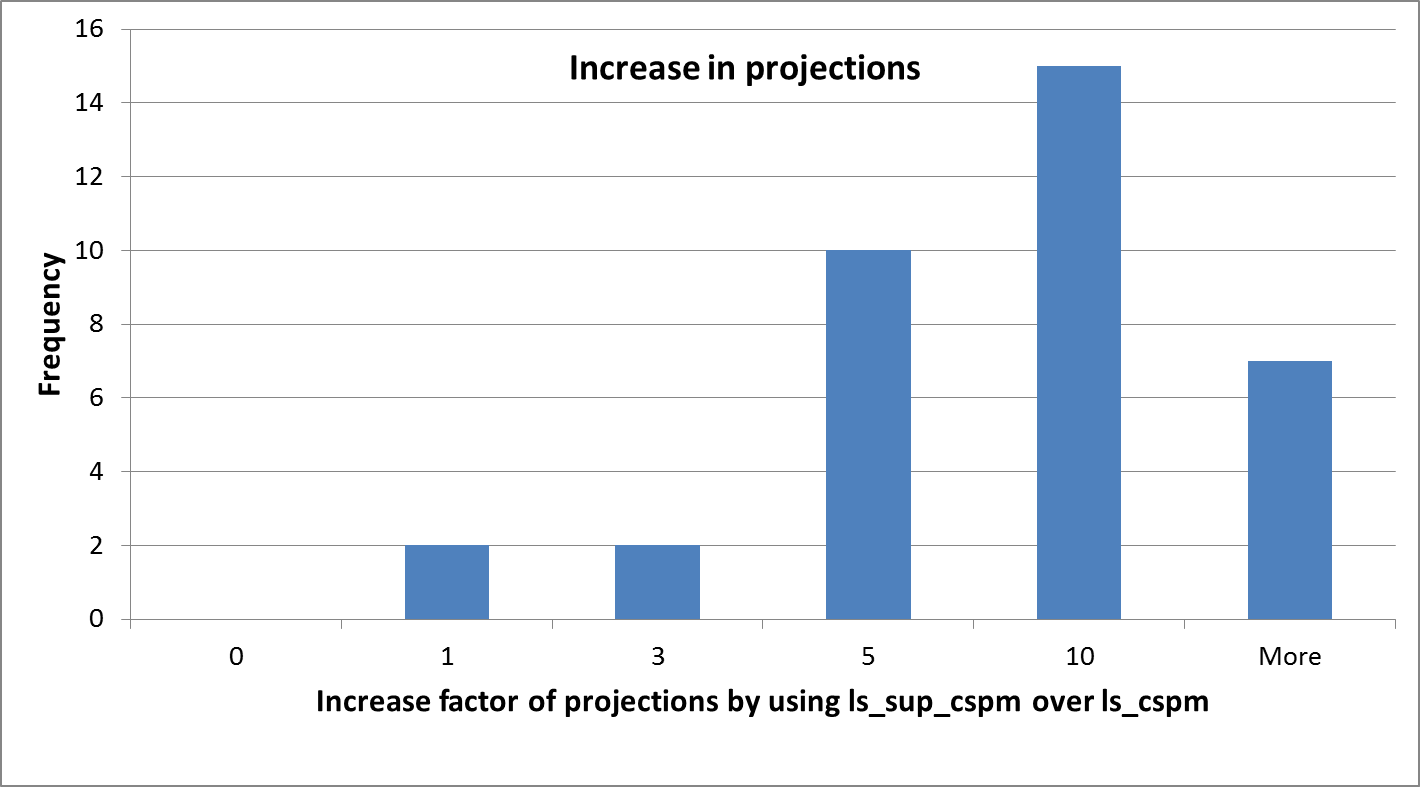}
} \subfigure[Increase in objective evaluations]{
\includegraphics[width=0.45\textwidth]{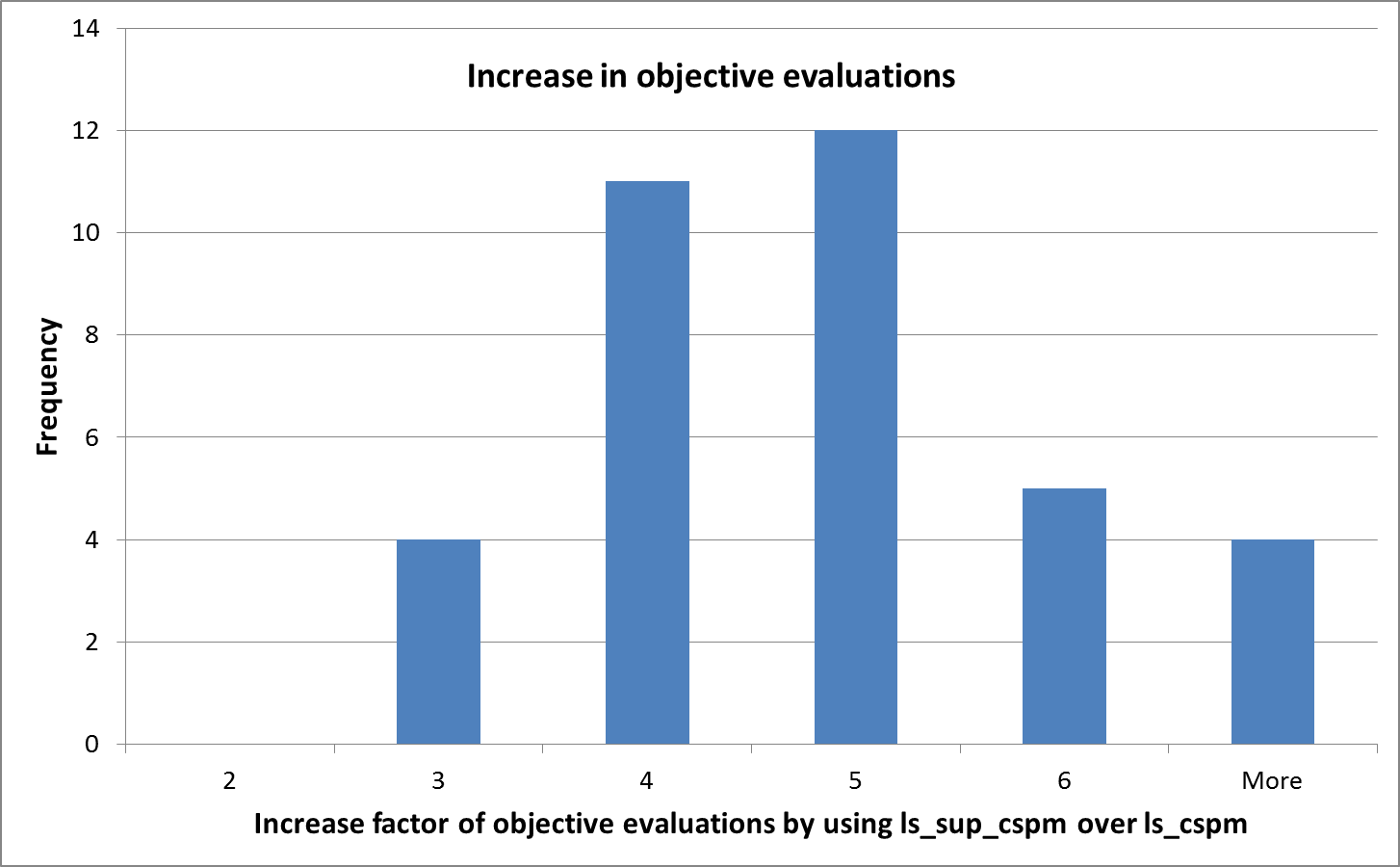}
}\caption{Factor of how much the complexity increases by using the
superiorized version of CSPM in the level set scheme (IMRT problems).}%
\label{fig:SpeedupCompLsSupCspmVsLsCspmIMRT}%
\end{figure}

Yet, for the QPS problems, there is also some potential when it comes to the
rate of decrease of the objective function, as shown in Figure
\ref{fig:SpeedupObjDecLsSupAccCspmVsLsAccCspm}.

\begin{figure}[htp]
\centering
\includegraphics[width=0.6\textwidth]{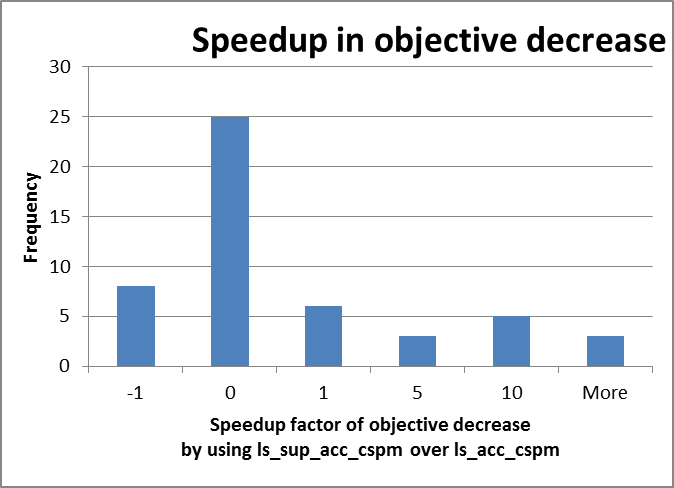}\caption{Factor
of speedup of the objective function decrease by using the superiorized
version of CSPM in the level set scheme for QPS problems. A factor of 0
indicates no difference, negative factors indicate that the superiorized
version exhibited a slower decrease.}%
\label{fig:SpeedupObjDecLsSupAccCspmVsLsAccCspm}%
\end{figure}

For IMRT problems, this decrease was only marginal: the rate of
decrease of the superiorized versions is on average only about 0.5
times faster than the basic versions (note that 0 times would indicate they progress at the same rate).

Hence, the superiorized version of CSPM uses many more projections and
evaluations. However, if these are cheap to compute, then the potential
increase in objective function reduction in early iterations could lead to a
faster approach overall for some problems if the user is willing to stop the
optimization prematurely for practical reasons.

\subsection{Which is the better optimization scheme?}

We compare the level set scheme using the accelerated CSPM and the bisection
scheme with CSPM, as these are the most promising candidates for each optimization scheme given the analysis above.
For the QPS problems, there is no statistically significant
difference in the objective score between the two methods (the p-value of the
two-tailed test is 0.389). However, ls\_acc\_cspm outperforms the bisection
scheme significantly for IMRT problems - even if 4 outliers of the 36 problems
were removed (those which skewed the average quality score of the bisection
scheme to the right). With a p-value of 0.007, ls\_acc\_cspm produces a better
quality score than the equivalent bisection scheme.

Moreover, as Figure \ref{fig:ComplexityBisVsLs} shows for QPS problems, on
average, the bisection scheme requires many more projections and objective
evaluations.
An intuitive explanation to the results is perhaps because in the bisection scheme one may be in an infeasible detecting stage several times, and in each such a stage many calculations are done (which eventually lead one to conclude that infeasibility has been detected). In the level-set scheme an infeasibility stage can happen only one time, and in the other stages usually feasibility is detected.

\begin{figure}[htp]
\centering
\subfigure[Increase in projections]{
\includegraphics[width=0.45\textwidth]{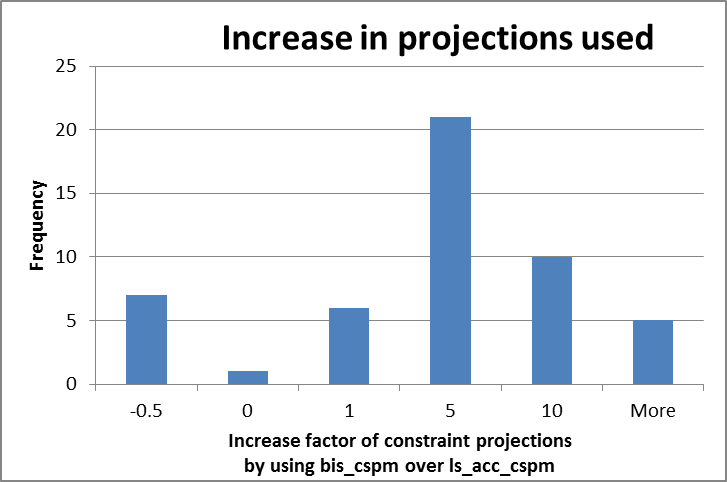}
} \subfigure[Increase in objective evaluations]{
\includegraphics[width=0.45\textwidth]{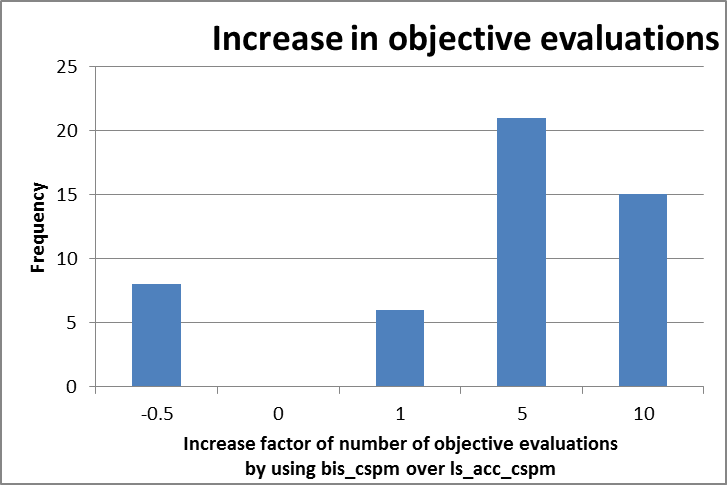}
}\caption{Factor of how much the complexity increases for QPS problems by
using the bisection scheme over the level set scheme. `0' indicates no
difference, and negative values indicate a decrease in complexity.}%
\label{fig:ComplexityBisVsLs}%
\end{figure}

For IMRT problems, the increase is less pronounced, but
consistent. There the increase is up to factor 4.

In terms of rate of objective decrease, the results are mixed. In fact, it
seems that bisection can be expected to decrease the objective a little faster
than the level set scheme. However, Figure
\ref{fig:SpeedupObjDecBisCspmVsLsAccCspm} shows that for the QPS problems this
does not happen very often.

\begin{figure}[htp]
\centering
\includegraphics[width=0.6\textwidth]{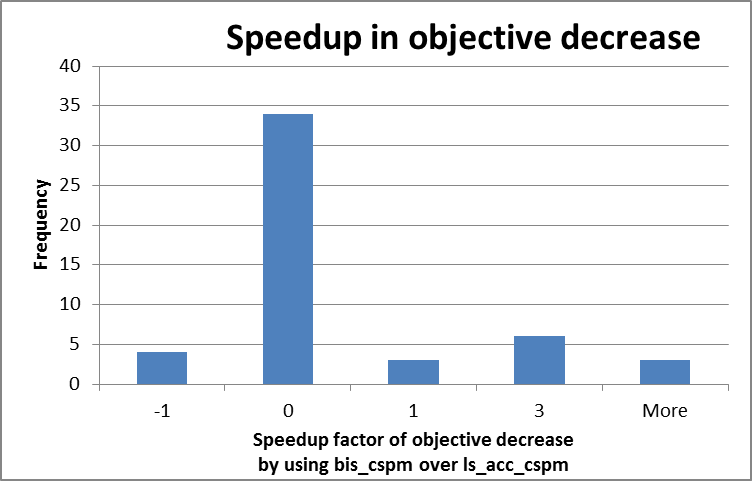}\caption{Factor
of speedup of the objective function decrease by using the bisection scheme
over the level set scheme (QPS problems).}%
\label{fig:SpeedupObjDecBisCspmVsLsAccCspm}%
\end{figure}For IMRT problems, the results are similar, however, it seems as
if a speedup of factor 2 occurred quite frequently.

The level set scheme seems to be the better optimization tool for IMRT
problems in terms of the quality and complexity. But, if one allows superiorization, then this is not always the case and the differences might be very minor, compare for example "ls\_sup\_cspm" and "bis\_sup\_cspmsee" in Tables \ref{table:SummaryQualityScoresAlgorithms_QPS} and \ref{table:SummaryQualityScoresAlgorithms_IMRT}.
Overall, although both strategies are able to obtain similar qualities in the solutions for the QPS
problems, there is a clear advantage of the level set scheme over the
bisection scheme when it comes to complexity.

\section{Concluding remarks and Further research}
\label{sec:conclusion}

Projection methods are known for their computational efficiency and
simplicity. This is the reason we decided to use a well-known reformulation of
a convex optimization problem and apply projection methods within that
general scheme. While at this point the convergence proof for the scheme is
valid only when finite convergent algorithms are used, numerical experiments
show that general convergent algorithms also generate good solutions when the
stopping rule is chosen in an educated way. We believe that the mathematical
validity of this relies on Remark \ref{remark:conv} and is still under investigation.

Another direction we plan to investigate is based on Yamagishi and Yamada
results \cite{yy08}, which show how to replace the subgradient projections by
a more efficient projection when additional knowledge, such as lower bounds is
provided. In addition we plan to study accelerating techniques for projection
methods, for example the recent work of Pang \cite{Pang12} and \cite{Pang13}. Another direction for investigation is the usage of other type of projection methods, for example Bregman projection, see e.g., \cite{CZ97}.\bigskip

\section*{acknowledgement}
We wish to thank the referees for their thorough analysis and review, all their comments and suggestions helped tremendously in improving the quality of this paper and made it suitable for publication. In addition we thank the Associate Editor for his time and effort invested in handling our paper and providing useful remarks. Last but not least, we wish to thank Prof. Yair Censor for his helpful comments and providing useful references.

This work was supported by the Federal Ministry of Education and Research of Germany
(BMBF), Grant No. 01IB13001 (SPARTA). The first author's work was also supported
by ORT Braude College and the Galilee Research Center for Applied Mathematics, ORT Braude College.

\end{document}